\numberwithin{equation}{section}
\theoremstyle{plain}
\newtheorem{Th}{Theorem}[section]
\newtheorem{Lem}[Th]{Lemma}
\newtheorem{Prop}[Th]{Proposition}
\theoremstyle{definition}
\newtheorem{Def}[Th]{Definition}
\newtheorem{Rem}[Th]{Remark}
\newtheorem{?}[Th]{Problem}
\begin{document}

\title{Bounded geodesic image theorem via bicorn curves}

\author{Xifeng Jin}

\address{Xifeng Jin\\
Department of Mathematics\\
University at Buffalo--SUNY\\
Buffalo, NY 14260-2900, USA\\
xifengji@buffalo.edu}

\date{\today}
\subjclass[2010]{Primary: 57M60. Secondary: 57M20.}
\keywords{curve graphs, bounded geodesic image theorem, bicorn curves.}

\begin{abstract} 
We give a uniform bound of the bounded geodesic image theorem for the closed oriented surfaces. The proof utilizes the bicorn curves introduced by Przytycki and Sisto \cite{PS}. With the uniformly bounded Hausdorff distance of the bicorn paths and 1-slimness of the bicorn curve triangles, we are able to show the bound is 44 for both non-annular and annular subsurfaces. In a particular case when the distance between a geodesic and an essential boundary component of subsurface (or core if it is annular) is $\geq$ 18, then the bound can be as small as 3, which is comparable to the bound 4 in the motivating examples by Masur and Minsky \cite{MM2}, and is same as the bound given by Webb \cite{Webb2} for non-annular subsurfaces.
\end{abstract}
\maketitle

\section{Introduction} 

Let $S = S_{g, b}$ be a compact oriented surface with genus $g$ and $b$ boundary components, the \emph{complexity} of the surface is $\xi(S)=3g+b-3$. A simple closed curve on the surface $S$ is \emph{essential} if it does not bound a disk or an annulus. A simple properly embedded arc on the surface $S$ is \emph{essential} if it does not cut off a disk.

\subsection{Curve complex}

Suppose that $\xi(S) \geq 2$, the \emph{arc and curve graph} $\mathcal{AC}(S)$ is the graph whose vertices are the isotopy classes of essential properly embedded arcs and essential curves on the surface $S$, where the isotopy allows the endpoints of the arcs move around in the corresponding boundary components. Two vertices can be connected by an edge in $\mathcal{AC}(S)$ if they can be realized disjointly. The \emph{curve graph} $\mathcal{C}(S)$ is the full subgraph of $\mathcal{AC}(S)$ whose vertices are the isotopy classes of essential curves. The \emph{curve graph} is the 1-skeleton of the curve complex introduced by Harvey \cite{Harvey} with the aim to study the mapping class group.  $\mathcal{C}_0(S)$ and $\mathcal{AC}_0(S)$ are used to denote the vertices of $\mathcal{C}(S)$ and $\mathcal{AC}(S)$, respectively.

For the surfaces with $\xi(S) \leq 1$, we need to modify the definition slightly. If $\xi(S) = 1$, the surface $S$ is either a one-holed torus $S_{1,1}$ or a four-holed sphere $S_{0, 4}$. In both cases, two vertices are connected by an edge if some of their curve representatives intersect exactly once for $S_{1,1}$ or twice for $S_{0, 4}$. The resulting curve graphs are isomorphic to the Farey graph. If $\xi(S) = 0$, then $S = S_{1}$ is a torus or $S = S_{0,3}$ is a pair of pants. Similar to the once-punctured torus,  the curve graph of torus is isomorphic to the Farey graph if two curve representatives intersect exactly once are joined by an edge. The curve graph of a pair of pants is empty. If $S = S_{0,2}$ is an annulus, we define the vertices of $\mathcal{C}(S)$ to be the essential arcs up to isotopy fixing the boundary component pointwise, and two vertices can be joined by an edge if they can be realized with disjoint interiors. In this case, we let $\mathcal{C}(S) = \mathcal{AC}(S)$.

For any two vertices $x$ and $y$ in the $\mathcal{C}_0(S)$, the \emph{distance} $d_{\mathcal{C}(S)}(x,y)$ between $x$ and $y$
is the minimal number of edges in $\mathcal{C}(S)$ joining $x$ and $y$. A 
\emph{geodesic} in the curve graph $\mathcal{C}(S)$ is a sequence of vertices 
$\Gamma=(\gamma_i)_{i \in I}$ such that $d_{\mathcal{C}(S)}(\gamma_i,\gamma_j)=|i-j|$ for 
all $i, j$ in $I$. These notions can be defined on the arc and curve graph $\mathcal{AC}(S)$ in the same manner. A metric space is called \emph{$\delta$-hyperbolic} if for every geodesic triangle, each side lies in a $\delta$-neighborhood of the other two sides.  A seminal result of the curve graphs is its hyperbilicity proved by Masur and Minsky.

\begin{Th} \emph{(Masur-Minsky \cite{MM1} Theorem 1.1)}
\label{infinite diameter}
Let $S$ be a compact oriented surface with $\xi(S) \geq 1$, the 
curve graph $\mathcal{C}(S)$ is a $\delta$-hyperbolic metric space with infinite 
diameter for some $\delta$, where $\delta$ depends on the surface.   
\end{Th}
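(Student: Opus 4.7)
The plan is to handle the two assertions separately: uniform $\delta$-hyperbolicity and infinite diameter. For hyperbolicity I would not follow the Masur--Minsky original Teichm\"uller-geometric proof, but instead use the bicorn curve framework of Przytycki--Sisto which this paper is built on, since it delivers a clean combinatorial argument with an effective $\delta$ that is independent of the surface.

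Fix two vertices $\alpha, \beta \in \mathcal{C}_0(S)$ realized in minimal position. A \emph{bicorn} between them is an essential simple closed curve obtained by concatenating one subarc of $\alpha$ with one subarc of $\beta$ meeting only at their endpoints. The first step is to show that the collection of bicorns carries a natural \emph{bicorn path} structure, that is, an ordering in which consecutive bicorns are disjoint and hence joined by an edge in $\mathcal{C}(S)$; in particular $\alpha$ and $\beta$ lie in the same component. The second step is the slim triangle property: for any third vertex $\gamma$, every bicorn between $\alpha$ and $\beta$ is disjoint from at least one bicorn on the $\alpha\gamma$ or $\beta\gamma$ bicorn path. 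This is proved by a combinatorial case analysis tracking how the constituent arcs intersect. Finally a Morse-type lemma showing that bicorn paths uniformly fellow-travel $\mathcal{C}(S)$-geodesics converts 1-slimness of bicorn triangles into ordinary $\delta$-slimness of geodesic triangles with a uniform $\delta$; the Farey cases $\xi(S)=1$ and the annular case are handled by separate, standard combinatorial arguments matching the respective definitions.

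For infinite diameter I would exhibit, for each $n$, two vertices at $\mathcal{C}(S)$-distance at least $n$. The most economical route is to take a pseudo-Anosov homeomorphism $f$ of $S$ (which exists once $\xi(S)\geq 1$ by Thurston's construction) and show that the orbit $\{f^k(\alpha)\}$ of any vertex $\alpha$ escapes every ball. Combining the hyperbolicity established in the first part with the absence of periodic essential simple closed curves under $f$ yields that $f$ acts loxodromically, hence with positive stable translation length. A self-contained alternative uses that $i(\alpha, f^k(\alpha))$ grows like $\lambda^k$ for the stretch factor $\lambda>1$, together with a Hempel-type inequality bounding intersection number above by an exponential in the $\mathcal{C}(S)$-distance; this immediately gives $d_{\mathcal{C}(S)}(\alpha, f^k(\alpha)) \to \infty$.

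The main obstacle is obtaining $\delta$ uniformly in $S$: every arc surgery and subarc replacement must be controlled to remain in minimal position, and the slim triangle case analysis must avoid any appeal to surface-dependent complexity bounds. A secondary subtlety is the annular case, where the vertices of $\mathcal{C}(S)$ are arcs rather than curves and edges encode disjoint interiors; bicorns and the notion of consecutive disjointness have to be reinterpreted accordingly before the main argument can be run in parallel with the non-annular case.
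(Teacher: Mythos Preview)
The paper does not prove this theorem; it is quoted as background, attributed to Masur--Minsky, with alternative and uniform proofs credited to Bowditch, Hamenst\"adt, Aougab, Clay--Rafi--Schleimer, Hensel--Przytycki--Webb, and Przytycki--Sisto in the surrounding paragraph. There is thus no ``paper's own proof'' to compare against.

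Your outline via bicorn curves is in the spirit of Przytycki--Sisto, which the paper does use later, but one technical claim is wrong: consecutive bicorns in a bicorn path are \emph{not} in general disjoint. They intersect at most once, hence lie at distance at most $2$ in $\mathcal{C}(S)$, not $1$; the paper itself notes that ``a bicorn path is not a real path, as the adjacent curves are not necessarily disjoint.'' Likewise the $1$-slim triangle statement in Przytycki--Sisto is that each bicorn on one side intersects some bicorn on the other two sides at most once, not that it is disjoint from one. These slips do not sink the argument, since intersecting at most once still gives a uniform distance bound, but the assertions as written are false. Also bear in mind that the bicorn proof as in \cite{PS} is for \emph{closed} surfaces; for surfaces with boundary one must check that bicorns stay essential, a point the paper flags in its closing remark. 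Your infinite-diameter argument via pseudo-Anosov orbits and the Hempel-type intersection bound is standard and correct.
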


Alternative proofs were given by Bowditch \cite{B1} and Hamenst\"{a}dt \cite{Ha}. Moreover, the hyperbilicity constant $\delta$ can be chosen to be independent of the surface. The existence of such uniform constant has been proved independently by Aougab \cite{Aougab}, Bowditch \cite{B2}, Clay-Rafi-Schleimer \cite{CRS}, Hensel-Przytycki-Webb \cite{HPW}. Przytycki and Sisto \cite{PS} also proved it for the closed oriented surfaces.

\subsection{Subsurface projection}

A subsurface $Y$ is an \emph{essential subsurface} of $X$, 
if $Y$ is a compact, connected, oriented, proper subsurface such that each component of $\partial Y - \partial X$ is essential in $X$. Suppose $\xi(Y) \geq 1$, we define a map $\pi_A:\mathcal{C}_0(X)\rightarrow 
\mathcal{P}(\mathcal{AC}_0(Y))$, where $\mathcal{P}(\mathcal{AC}_0(Y))$ is the power 
set of $\mathcal{AC}_0(Y)$. Take any $\alpha$ in 
$\mathcal{C}_0(X)$, then consider the representative of $\alpha$ such that it 
intersects $Y$ minimally, $\pi_A(\alpha)$ is the set of all isotopy classes of 
$\alpha \cap Y$ relative to the boundary of $Y$. $\pi_A(\alpha)$ is empty if $\alpha$ can be 
realized disjointly from $Y$. We say $\alpha$ \emph{cuts} $Y$ if $\alpha \cap 
Y \neq \varnothing$, and $\alpha$ \emph{misses} $Y$ if $\alpha \cap Y = \varnothing$.

There is a natural way to send the arcs back to the curves in $\mathcal{C}_0(Y)$. We can define $\pi_0: \mathcal{AC}_0(Y) \rightarrow \mathcal{C}_0(Y)$ as 
follows. If $\alpha$ is in the $Y$, then $\pi_A(\alpha) = \alpha$ in $\mathcal{AC}_0(Y)$ and $\pi_0( \pi_A(\alpha)) = \alpha$ in $\mathcal{C}_0(Y)$. Otherwise, 
$\pi_A(\alpha)=\{\alpha_1,\alpha_2,\cdots,\alpha_n\}$ is a collection of isotopy 
classes of essential properly embedded arcs in $Y$. The set 
$\pi_0(\pi_A(\alpha))$ is the isotopy classes of the essential components of 
$\partial N(\alpha_i\cup \partial Y)$ in $Y$, where
$N(\alpha_i\cup Y)$ is a regular neighborhood of $\alpha_i\cup \partial Y$ in 
$Y$. The composition 
$\pi_0\circ \pi_A = \pi_Y: \mathcal{C}_0(X) \rightarrow \mathcal{C}_0(Y)$ is called the \emph{subsurface projection}.

If $Y$ is an annulus, an alternative definition is needed. Let $X$ be endowed with a complete hyperbolic metric with finite area and $Y \subset X$ is an essential annulus. Let $p_Y: \widetilde{X}_Y \rightarrow X$ be the annular covering map such that $Y$ lifts homeomorphically to $Y' \subset \widetilde{X}_Y$ and $\widetilde{X}_Y \cong \text{interior}(Y)$. The compactification of $\widetilde{X}_Y$ with the hyperbolic metric induced from $X$ is a closed annulus and it is denoted by $\overline{X}_Y$. If a curve $\alpha$ cuts $Y$, the subsurface projection $\pi_Y(\alpha)$ is the set of arcs of the full preimage $\widetilde{\alpha} = p_Y^{-1}(\alpha)$ that connect the two boundary components of $\overline{X}_Y$. If a curve $\alpha$ misses $Y$, then $\pi_Y(\alpha) = \varnothing$. 
Suppose that $v$ is the core of $Y$, then we also use $\pi_v$ to denote the projection and $d_v$ to denote the distance.
For more details about the subsurface projection and bounded geodesic image theorem, see the paper of Masur and Minsky \cite{MM2}.

\begin{Th}\emph{(Bounded Geodesic Image Theorem, Masur-Minsky \cite{MM2} Theorem 3.1)}
\label{bounded}
Let $Y$ be a subsurface of $X$ with $\xi(Y)\neq 0$ and let $\Gamma=(\gamma_i)_{i\in I}$ be a geodesic in $\mathcal{C}(X)$. If each $\gamma_i$ cuts $Y$, then there is a constant $M$ depending only on the surface so that $d_Y(\Gamma)\leq M$. 
\end{Th}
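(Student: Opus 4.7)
The plan is to prove Theorem \ref{bounded} by tracking subsurface projections along bicorn paths rather than along the geodesic $\Gamma$ directly, and then transferring the bound back using the uniform Hausdorff closeness of bicorn paths to geodesics.

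First, I would reduce to bounding $d_Y(\gamma_0,\gamma_n)$ for the endpoints of $\Gamma$ and consider the bicorn path $P(\gamma_0,\gamma_n)$, whose vertices are the simple closed curves obtained by concatenating a subarc of $\gamma_0$ with a subarc of $\gamma_n$ between two of their intersection points. The central property is that any two consecutive bicorns $\beta_k,\beta_{k+1}$ on $P$ share one of their two constituent arcs $a$. If both bicorns cut $Y$ and the shared arc $a$ also cuts $Y$, then $a$ restricts to an essential arc of $Y$ common to $\pi_Y(\beta_k)$ and $\pi_Y(\beta_{k+1})$, forcing $d_{\mathcal{C}(Y)}(\beta_k,\beta_{k+1}) \leq 2$. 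This is the mechanism that controls how fast the projection can move along $P$.

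Next I would treat bicorns that miss $Y$ (or whose shared arc lies off $Y$). Because every $\gamma_i$ on $\Gamma$ cuts $Y$ by hypothesis, and $P$ lies within uniformly bounded Hausdorff distance of $\Gamma$, the 1-slimness of bicorn triangles lets me, for each bicorn $\beta$ missing $Y$, triangulate between $\beta$, a nearby $\gamma_i$ (which cuts $Y$), and the two bicorn neighbors of $\beta$ on $P$. Combined with the shared-arc principle and the Hausdorff bound, this gives a uniformly bounded detour in $\mathcal{C}(Y)$ around each bicorn missing $Y$. Summing the "slow" steps with these "detour" steps produces a finite universal bound independent of the length of $\Gamma$.

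The main obstacle will be the explicit constant tracking: assembling (i) the shared-arc estimate for consecutive bicorns, (ii) the 1-slim triangle detours for bicorns that miss $Y$, and (iii) the uniform Hausdorff bound between bicorn paths and geodesics from Przytycki--Sisto, in such a way as to extract the universal bound $44$. For the annular case I would lift bicorns to arcs in the annular cover $\overline{X}_Y$ and run exactly the same argument there, using essential arcs of $\overline{X}_Y$ in place of arcs in $Y$ and the shared-arc principle to bound twist differences. Finally, for the sharper bound $3$ under the hypothesis $d_{\mathcal{C}(X)}(\Gamma,\partial Y) \geq 18$ (respectively $d_v(\Gamma,v) \geq 18$ when $Y$ is annular with core $v$), I would verify that this distance assumption forces every bicorn on $P$ to intersect $Y$ essentially, eliminating the triangle-detour step entirely, so that only the shared-arc estimate contributes and one obtains the small constant.
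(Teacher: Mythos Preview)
Your proposal has a genuine gap in the mechanism that is supposed to produce a \emph{uniform} bound. The shared-arc estimate you describe gives $d_{\mathcal{C}(Y)}(\beta_k,\beta_{k+1})\le 2$ for consecutive bicorns, but summing these contributions along the bicorn path yields a bound proportional to the \emph{length} of that path, not a universal constant. Nothing in your sketch explains why the sum should telescope or why there are only boundedly many nontrivial steps. This is most visible in your last paragraph: under the hypothesis $d_{\mathcal{C}(X)}(\Gamma,\alpha)\ge 18$ you assert that eliminating the detour steps leaves ``only the shared-arc estimate'' and hence the constant $3$, but the shared-arc estimate applied step by step still gives roughly $2n$ for a bicorn path of length $n$, not $3$. (Incidentally, consecutive bicorns in the paper's path have \emph{nested} $\alpha$- and $\beta$-arcs rather than an identical shared arc, so the premise of that estimate also needs adjustment.)

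The paper's argument is structurally different and never sums along the bicorn path. For the ``far'' case it forms a bicorn \emph{triangle} with the boundary (or core) $\alpha$ of $Y$ as the third vertex. The $1$-slimness of bicorn triangles says each bicorn $b_k\in B(\gamma_i,\gamma_j)$ yields a bicorn in $B(\gamma_i,\alpha)$ or in $B(\gamma_j,\alpha)$; since the first option holds at the $\gamma_i$ end and the second at the $\gamma_j$ end, there is a single adjacent pair $b_k,b_{k+1}$ where the switch occurs. At that pivot one extracts a subarc of $\gamma_i$ and a subarc of $\gamma_j$ lying in $Y$ that are essentially disjoint, giving $d_{\mathcal{AC}(Y)}(\gamma_i,\gamma_j)\le 3$ directly. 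The Hausdorff bound of $14$ together with the hypothesis $\ge 18$ is used only to force each bicorn to meet $\alpha$ at least four times, so that the pivot surgery is available; it is not used to transfer a path-sum back to the geodesic. For the full theorem the paper then splits $\Gamma$ into a middle segment of length at most $36$ lying in the $18$-neighborhood of $\alpha$, handled by the $1$-Lipschitz property of $\pi_A$, and two outer rays handled by the triangle/pivot argument, yielding $3+1+36+1+3=44$. Note that the $1$-slim triangles are taken with $\alpha$, not with nearby vertices $\gamma_i$ of $\Gamma$ as in your detour step.
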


The notation $d_Y(\Gamma): = \text{diam}_{\mathcal{C}(Y)} (\underset{\gamma_i \in \Gamma}{\cup} \pi_Y(\gamma_i))$ is used above.

\begin{Lem}\emph{(Masur-Minsky \cite{MM2} Lemma 2.2)}
\label{2-quasi}
Let $\xi(Y) \geq 1$, and $d_{\mathcal{AC}(Y)}(\alpha,\beta)\leq 1$ for $\alpha, \beta \in \mathcal{AC}(Y)$, then
$d_{\mathcal{C}(Y)}(\pi_0(\alpha),\pi_0(\beta))\leq 2$.  
\end{Lem}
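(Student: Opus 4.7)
The plan is a case analysis based on whether each of $\alpha,\beta$ is a curve or an arc in $\mathcal{AC}_0(Y)$. The hypothesis $d_{\mathcal{AC}(Y)}(\alpha,\beta)\le 1$ lets us assume that $\alpha$ and $\beta$ are either equal or realized with disjoint interiors. If both are essential curves, then $\pi_0$ is the identity on them, and disjointness gives $d_{\mathcal{C}(Y)}(\pi_0(\alpha),\pi_0(\beta))\le 1$ directly. If exactly one, say $\alpha$, is a curve and $\beta$ is an arc, a representative of $\alpha$ disjoint from $\beta\cup\partial Y$ can be pushed out of a small regular neighborhood $N(\beta\cup\partial Y)$, so $\alpha$ is disjoint from every essential boundary component of that neighborhood, and again the distance is at most $1$.

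The substantive case is when both $\alpha$ and $\beta$ are essential arcs realized with disjoint interiors. The idea is to produce an intermediate vertex $\gamma\in\mathcal{C}_0(Y)$. I would choose nested regular neighborhoods so that $N(\alpha\cup\partial Y)$ and $N(\beta\cup\partial Y)$ both sit inside the interior of $N:=N(\alpha\cup\beta\cup\partial Y)$. Assuming $\partial N$ has a component $\gamma$ that is essential in $Y$, the curve $\gamma$ is disjoint from $\partial N(\alpha\cup\partial Y)$ and from $\partial N(\beta\cup\partial Y)$, hence in particular disjoint from some representative $\gamma_\alpha\in\pi_0(\alpha)$ and some $\gamma_\beta\in\pi_0(\beta)$. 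The triangle inequality then yields $d_{\mathcal{C}(Y)}(\pi_0(\alpha),\pi_0(\beta))\le 2$.

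The main obstacle is the degenerate configuration in which $\alpha\cup\beta\cup\partial Y$ fills $Y$, so that $\partial N$ has no component essential in $Y$ and the intermediate $\gamma$ above does not exist. This can happen only in low complexity, where $\mathcal{C}(Y)$ is the Farey graph whose edges are defined by minimal (rather than zero) intersection. I would dispose of these cases by direct inspection: the limited ways in which two disjoint essential arcs can fill a low-complexity surface force $\pi_0(\alpha)$ and $\pi_0(\beta)$ to be either isotopic or to realize the minimal intersection, placing them within a single Farey edge and giving the bound $d_{\mathcal{C}(Y)}\le 1\le 2$.
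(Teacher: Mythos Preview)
The paper does not give its own proof of this lemma: it is stated as Lemma~2.2 of Masur--Minsky \cite{MM2} and used as a black box, so there is no in-paper argument to compare your proposal against.

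That said, your outline is essentially the standard Masur--Minsky argument. The case split (curve/curve, curve/arc, arc/arc) is correct, and in the arc/arc case the intermediate curve coming from an essential component of $\partial N(\alpha\cup\beta\cup\partial Y)$ is exactly the mechanism used in \cite{MM2}. One point deserves slightly more care than you give it: you assert that ``$\alpha\cup\beta\cup\partial Y$ fills $Y$'' is the only obstruction and that it occurs only when $\xi(Y)=1$. In fact you must also rule out the possibility that every interior component of $\partial N$ is boundary-parallel rather than disk-bounding; an Euler-characteristic count (two disjoint arcs raise $\chi$ by~$2$, so a complement consisting solely of disks forces $\chi(Y)\ge -1$) together with the observation that boundary components of $Y$ not met by $\alpha\cup\beta$ are already accounted for inside $N$ handles this, but it should be stated. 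Your treatment of the $\xi(Y)=1$ case by ``direct inspection'' is acceptable in spirit---this is genuinely a finite check on $S_{1,1}$ and $S_{0,4}$, and the Farey edge relation (minimal intersection) does give $d_{\mathcal{C}(Y)}\le 1$ in the filling configurations---but in a written proof you would want to exhibit the relevant arc pairs explicitly rather than leave it as an assertion.
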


By the Lemma \ref{2-quasi}, one only needs to consider the diameter of the projection $\pi_A:\mathcal{C}_0(X)\rightarrow \mathcal{P}(\mathcal{AC}_0(Y))$, because $\pi_0: \mathcal{AC}_0(Y) \rightarrow \mathcal{C}_0(Y)$ is 2-Lipschitz.  Moreover, the projection $\pi_A$ satisfies the 1-Lipschitz property for the surfaces with $\xi(X) \geq 2$.

\begin{Lem}\emph{(Webb \cite{Webb} Lemma 1.2)}
\label{arc bounds}
 Let $Y$ be an essential subsurface of $X$ and let $\gamma_1,\gamma_2$ be curves on $X$. Suppose that $\gamma_1$ cuts $Y$, $\gamma_2$ cuts $Y$ and $\gamma_1$ misses $\gamma_2$. Then $d_{\mathcal{AC}(Y)}(\pi_A(\gamma_1),\pi_A(\gamma_2))\leq1$.
\end{Lem}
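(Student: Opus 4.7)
The plan is to realize all the relevant curves simultaneously in minimal position, then transport disjointness from $X$ to $Y$. Equip $X$ with a complete hyperbolic metric of finite area and replace $\gamma_1$, $\gamma_2$, and each component of $\partial Y$ by its geodesic representative. Since $\gamma_1$ and $\gamma_2$ can be realized disjointly on $X$, their geodesic representatives are either equal or disjoint, and the same holds for any pair involving $\partial Y$. In particular all of these representatives are simultaneously in minimal position, so the components of $\gamma_i\cap Y$ are precisely the arcs (or curves) whose isotopy classes comprise $\pi_A(\gamma_i)$.

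For the non-annular case I would then argue as follows. Pick any $\alpha_1\in\pi_A(\gamma_1)$ and $\alpha_2\in\pi_A(\gamma_2)$, represented by components of $\gamma_1\cap Y$ and $\gamma_2\cap Y$ in the chosen geodesic position. Because $\gamma_1$ and $\gamma_2$ are disjoint in $X$, these components are disjoint in $Y$; being essential arcs (or curves) realized disjointly, $\alpha_1$ and $\alpha_2$ are either equal or joined by an edge in $\mathcal{AC}(Y)$. Hence $d_{\mathcal{AC}(Y)}(\alpha_1,\alpha_2)\leq 1$, and taking the sup over choices of $\alpha_1,\alpha_2$ yields the claimed bound on $d_{\mathcal{AC}(Y)}(\pi_A(\gamma_1),\pi_A(\gamma_2))$.

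For the annular case ($\xi(Y)=0$, $\mathcal{C}(Y)=\mathcal{AC}(Y)$), I would run the same argument one cover up. Lift to the annular cover $p_Y\colon \widetilde X_Y\to X$ and its compactification $\overline X_Y$. The arcs constituting $\pi_Y(\gamma_i)$ are the components of $p_Y^{-1}(\gamma_i)$ in $\overline X_Y$ that cross the annulus. Disjointness of $\gamma_1,\gamma_2$ downstairs implies $p_Y^{-1}(\gamma_1)\cap p_Y^{-1}(\gamma_2)=\varnothing$, so any crossing arc of the first preimage is disjoint from any crossing arc of the second, giving an edge in $\mathcal{C}(Y)$.

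The only real subtlety, and what I would expect to be the main technical point, is justifying that the geodesic (equivalently: simultaneous minimal-position) representatives really do give the isotopy classes of $\pi_A(\gamma_i)$ without introducing inessential or boundary-parallel components; this uses essentiality of $\partial Y$ in $X$ and the standard fact that geodesic representatives on a hyperbolic surface are globally in minimal position. Everything else is a one-line disjointness observation together with the definition of an edge in $\mathcal{AC}(Y)$.
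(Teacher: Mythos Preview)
The paper does not give its own proof of this lemma; it is simply quoted from Webb \cite{Webb} with attribution, so there is nothing to compare your argument against in this text. Your argument is the standard one and is correct: put $\gamma_1$, $\gamma_2$, and $\partial Y$ in simultaneous minimal position (e.g.\ via geodesic representatives), so that disjointness of $\gamma_1$ and $\gamma_2$ in $X$ passes to disjointness of any pair of components of $\gamma_1\cap Y$ and $\gamma_2\cap Y$, giving an edge in $\mathcal{AC}(Y)$; the annular case is handled by lifting to the cover $\overline{X}_Y$, where disjoint preimages again yield arcs with disjoint interiors. This is essentially Webb's original reasoning as well.
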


Using the notation $d_{\mathcal{AC}(Y)} (\Gamma) := \text{diam}_{\mathcal{AC}(Y)}(\underset{\gamma_i \in \Gamma}{\cup} \pi_A(\gamma_i))$,  we restate the theorem for closed oriented surfaces.

\begin{Th}
\label{small bounds}
Suppose that $X$ is a closed oriented surface with  genus $\geq 2$. Let $Y$ be an essential subsurface of $X$ with $\xi(Y)\neq 0$ and let $\Gamma=(\gamma_i)_{i\in I}$ be a geodesic in $\mathcal{C}(X)$ such that each $\gamma_i$ cuts $Y$, then $d_{\mathcal{AC}(Y)}(\Gamma)\leq 44$ for both non-annular and annular subsurfaces $Y$. 
\end{Th}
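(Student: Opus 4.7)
The plan is to replace the $\mathcal{C}(X)$-geodesic $\Gamma$ by the \emph{bicorn path} $\mathcal{B}(\gamma_0,\gamma_n)$ of Przytycki--Sisto between its endpoints and to bound the arc projection along this combinatorial path instead. Two structural inputs from \cite{PS} drive the argument: (a) any bicorn path lies within uniformly bounded Hausdorff distance of every $\mathcal{C}(X)$-geodesic with the same endpoints, and (b) bicorn triangles are $1$-slim in $\mathcal{C}(X)$. These will play the role that the hierarchy machinery played in the original Masur--Minsky proof, allowing an argument that runs entirely at the level of curves and arcs rather than through markings.

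The central estimate is a uniform upper bound on $\diam_{\mathcal{AC}(Y)}(\pi_A(\mathcal{B}(\gamma_0,\gamma_n)))$. Each bicorn curve is a concatenation of a subarc of $\gamma_0$ and a subarc of $\gamma_n$, and two adjacent bicorns in $\mathcal{B}(\gamma_0,\gamma_n)$ differ by a single corner switch at an intersection point; when both cut $Y$, their arc projections differ only by a surgery localised near one point of $\partial Y\cap(\gamma_0\cup\gamma_n)$ and stay at uniformly bounded $d_{\mathcal{AC}(Y)}$-distance. To keep this from accumulating across a long bicorn path, I apply $1$-slimness of the bicorn triangle with a component $c$ of $\partial Y$ (or the core $v$, in the annular case) as third vertex: any bicorn $b_k$ whose projection strayed far from $\pi_A(\gamma_0)\cup\pi_A(\gamma_n)$ would be forced within $\mathcal{C}(X)$-distance $1$ of a bicorn of $c$ with $\gamma_0$ or $\gamma_n$, and those bicorns are disjoint from $Y$, forcing the original $b_k$ to have bounded projection after all.

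With the bicorn-path projection controlled, I transfer the bound back to $\Gamma$ via (a): each $\gamma_i$ has a bicorn $b_{k_i}\in\mathcal{B}(\gamma_0,\gamma_n)$ within $\mathcal{C}(X)$-distance $\leq H$ of it, and iterating Lemma~\ref{arc bounds} along a short $\mathcal{C}(X)$-path from $\gamma_i$ to $b_{k_i}$ (handling any intermediate curves that miss $Y$ by the same $1$-slimness trick with $\partial Y$) bounds $d_{\mathcal{AC}(Y)}(\pi_A(\gamma_i),\pi_A(b_{k_i}))$ by a uniform constant. Adding the bicorn-path diameter and the transfer contributions assembles to the explicit bound $44$. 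The annular case follows in parallel by lifting the bicorn construction to the annular cover $\overline{X}_Y$: adjacent bicorns again differ by a controlled surgery on their lifts, and the count of arcs in $\pi_v$ joining the two boundary circles of $\overline{X}_Y$ varies by a bounded amount per corner switch, with the same $1$-slimness applied in $\mathcal{C}(X)$ to the triangle on $(\gamma_0,\gamma_n,v)$.

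The main obstacle I anticipate is the interplay between the corner-switch combinatorics and the $1$-slim triangle with $c\subset\partial Y$ (or with $v$) as third vertex: bicorn curves that miss $Y$ do not admit $\pi_A$-images, so one must argue that their cutting neighbours project into a bounded $d_{\mathcal{AC}(Y)}$-neighbourhood of $\pi_A(\partial Y)$ and that the bicorn path cannot ``jump'' across such gaps without already being close to $\partial Y$ in $\mathcal{C}(X)$. Quantifying this uniformly over both the non-annular and annular settings is precisely what pins down the numerical value $44$ and, in the special case where $\Gamma$ stays at $\mathcal{C}(X)$-distance $\geq 18$ from $\partial Y$ (or $v$), leaves no room for the troublesome gaps to occur, producing the sharper bound $3$ advertised in the abstract.
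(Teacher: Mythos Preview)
Your proposal assembles the right ingredients (bicorn paths, the Hausdorff bound $14$, $1$-slim bicorn triangles) but in the wrong order, and two of the steps do not go through as stated.

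\textbf{The accumulation problem.} You propose to bound $\diam_{\mathcal{AC}(Y)}\bigl(\pi_A(\mathcal{B}(\gamma_0,\gamma_n))\bigr)$ by observing that adjacent bicorns differ by a single corner switch and then invoking $1$-slimness of the triangle on $(\gamma_0,\gamma_n,c)$ to prevent drift. But a bicorn path has length comparable to $|\gamma_0\cap\gamma_n|$, which is unbounded, so a per-step bound sums to nothing. Your proposed fix (``any bicorn whose projection strays far is within distance $1$ of a bicorn of $c$ with $\gamma_0$ or $\gamma_n$, and those are disjoint from $Y$'') is not correct: bicorns between $c\subset\partial Y$ and $\gamma_0$ are \emph{not} in general disjoint from $Y$, and being $\mathcal{C}(X)$-close to such a curve does not by itself bound the $\mathcal{AC}(Y)$-projection.

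\textbf{The transfer problem.} To move from $\gamma_i\in\Gamma$ to a nearby bicorn $b_{k_i}$ you want to iterate Lemma~\ref{arc bounds} along a $\mathcal{C}(X)$-path of length $\le 14$. But Lemma~\ref{arc bounds} requires every intermediate vertex to cut $Y$, and an arbitrary short geodesic from $\gamma_i$ to $b_{k_i}$ has no reason to avoid curves that miss $Y$. Your parenthetical ``$1$-slimness trick with $\partial Y$'' does not address this: the obstruction is in $\mathcal{C}(X)$, not in the bicorn combinatorics.

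\textbf{What the paper actually does.} The paper never bounds the projection of the bicorn path, and never transfers via Hausdorff distance. Instead it splits the \emph{geodesic} $\Gamma$ itself: the vertices of $\Gamma$ within distance $18$ of $\alpha$ (a boundary component of $Y$, or the core) form a subsegment $g$ of length $\le 36$, handled directly by the $1$-Lipschitz Lemma~\ref{arc bounds}. For any two vertices $\gamma_i,\gamma_j$ in the complementary ``far'' region, every bicorn in $B(\gamma_i,\gamma_j)$ lies at $\mathcal{C}(X)$-distance $\ge 18-14=4$ from $\alpha$, hence meets $\alpha$ in $\ge 4$ points. The $1$-slim triangle on $(\gamma_i,\gamma_j,\alpha)$ is then used not to control accumulated drift but to locate a \emph{single} pair of adjacent bicorns $b_k,b_{k+1}$ at which the slimness ``switches sides'' from $B(\gamma_i,\alpha)$ to $B(\gamma_j,\alpha)$; from the $\gamma_i$-arc of $b_k$ and the $\gamma_j$-arc of $b_{k+1}$ one reads off directly a pair of disjoint elements of $\pi_A(\gamma_i)$ and $\pi_A(\gamma_j)$, giving $d_{\mathcal{AC}(Y)}(\gamma_i,\gamma_j)\le 3$. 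The final count is $3+1+36+1+3=44$. The special case you mention at the end is thus the \emph{core lemma}, not a corollary.
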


The bounded geodesic image theorem was originally proved by Masur and Minsky \cite{MM2}.  With the uniform hyperbilicity of the curve graphs, Webb \cite{Webb} proved that the constant $M$ is independent of the surface. Later on, he gave explicit bounds in his dissertation \cite{Webb2} using the unicorn arcs, where the bounds is 62 for annular subsurfaces, and 50 for non-annular subsurfaces.  In \cite{Webb,Webb2}, Webb remarked that an unpublished proof of Chris Leininger combined with the work of Bowditch \cite{B2} also provided a uniform bound. Our proof relies on a uniformly bounded Hausdorff distance of the bicorn paths and 1-slimness of the bicorn curve triangles. The outline of the proof is similar to Webb's, while the proof is simple and the bound is smaller for the closed oriented surfaces. The approach can be generalized to the surfaces with boundary, possibly with larger bounds.

The paper is organized as follows. In the Section \ref{sec: example}, we briefly describle the motivating example by Masur and Minsky.
 We recall the definition of bicorn curves and bicorn paths between two curves in the Section \ref{sec: bicorn curves}. In the Section \ref{sec: proof}, we obtain a uniform bound for the bounded geodesic image theorem for closed oriented surfaces.

\section*{Acknowledgements}

The author would like to thank his advisor, Professor William W. Menasco, for alerting the author to the small bound in the motivating examples by Masur and  Minsky \cite{MM2}, and the encouragement of obtain small bounds for the bounded geodesic
image theorem. The author would also like to thank Professor Johanna Mangahas for reading a draft of this paper and offering helpful comments. Finally, the author is grateful to the support of the Dissertation Fellowship for Fall 2020 from the Department of Mathematics at the State University of New York at Buffalo.


\section{A motivating example}
\label{sec: example}
\begin{figure}[ht]
\scalebox{.5}{\includegraphics[origin=c]{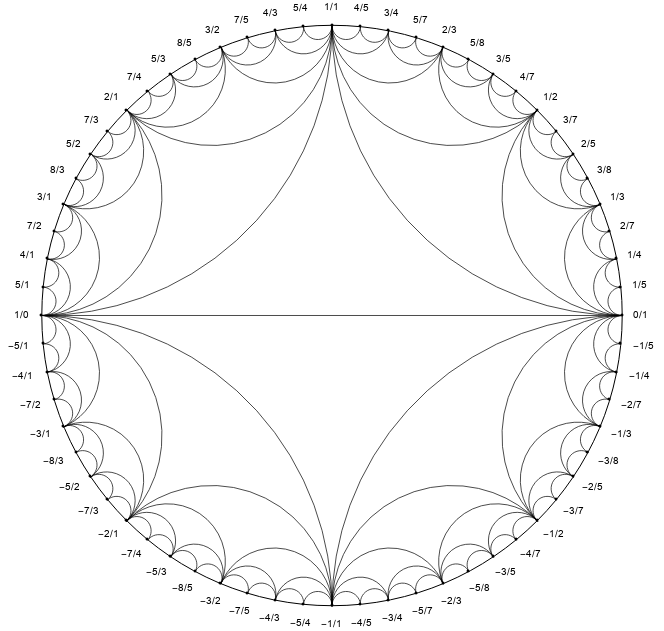}}
\caption{A typical Farey graph. Each vertex is labeled with a number in $\hat{\mathbb{Q}}$ that is a slope of the curve representative under a fixed homology basis. The figure was generated by Wolfram Mathematica, and the code snippet is availabe on the $\href{https://mathematica.stackexchange.com/questions/194838/how-can-i-plot-a-farey-diagram}{Stackexchange}$.}
\label{Farey}
\end{figure}

In this section, we will briefly describe a motivating example for the bounded geodesic image theorem from the Section 1.5 in \cite{MM2}.
The curve graph $\mathcal{C}(S_{1})$, $\mathcal{C}(S_{1,1})$ and $\mathcal{C}(S_{0,4})$ are isomorphic to the \emph{Farey graph}, as roughly illustrated in the Figure \ref{Farey}. It can be embedded in a unit disk, in which the rational numbers and the infinity  $\hat{\mathbb{Q}} = \mathbb{Q} \cup \{\infty\}$ are located on the boundary of the unit disk as the vertices of the graph.  Any two vertices $\frac{p_1}{q_1}$ and $\frac{p_2}{q_2}$ in lowest terms are connected by an edge if and only if the determinant $p_1 q_2 - p_2q_1 = \pm1$. Let $v$ be a vertex, then the \emph{link} of $v$, $\text{Link}(v)$, is a set of vertices that share an edge with $v$, which can be identified with the integers $\mathbb{Z}$. For example, if $v = \frac{0}{1}$, then the $\text{Link}(v) = \{ \cdots, -\frac{1}{2}, -\frac{1}{1}, \frac{1}{0}, \frac{1}{1}, \frac{1}{2}, \cdots \}$.

Let $u$ and $w$ be two vertices of a geodesic $h$ in the Farey graph, and a vertex $v$ of $ h$ follows $u$ and followed by $w$, the subsurface projection $d_v(u, w)$ defines a distance in the $\text{Link}(v)$, where $v$ is the core of an annular subsurface.  Let $h'$ be another geodesic with the same endpoints as $h$. If $d_v(u, w) \geq 5$, then $h'$ must contain the vertex $v$. In other words, if a geodesic does not contain $v$ (each vertex of the geodesic cuts $v$), then $d_v(u, w) \leq 4$ for any $u, w$ in the geodesic.


\section{Bicorn curves}
\label{sec: bicorn curves}

Przytycki and Sisto \cite{PS} introduced the bicorn curves to give a simple proof of the hyperbilicity of curve graphs for the closed oriented surfaces $S_{g \geq 2}$. For more applications of the bicorn curves, see \cite{CJM,MS,Pho,Ra1,Ra2,Vokes}.

\begin{Def}[\textbf{Bicorn curves}]
Let $\alpha, \beta \subset S_{g \geq 2}$ be two essential simple closed curves that intersect minimally.  An essential simple closed curve $\gamma$ is a \emph{bicorn curve}
between $\alpha$ and $\beta$ if either $\gamma=\alpha$, $\gamma=\beta$, or $\gamma$
is represented by the union of an arc $a \subset \alpha$ and an arc $b \subset \beta$, which we call the $\alpha$-arc and the $\beta$-arc of $\gamma$, and $a$ only intersects $b$ at the endpoints.
If $\gamma = \alpha$, then its $\alpha$-arc is $\alpha$ and its $\beta$-arc is empty, similarly if $\gamma = \beta$, then its $\beta$-arc is $\beta$ and its $\alpha$-arc is empty.
\end{Def}

\begin{figure}[ht]
\scalebox{.30}{\includegraphics[origin=c]{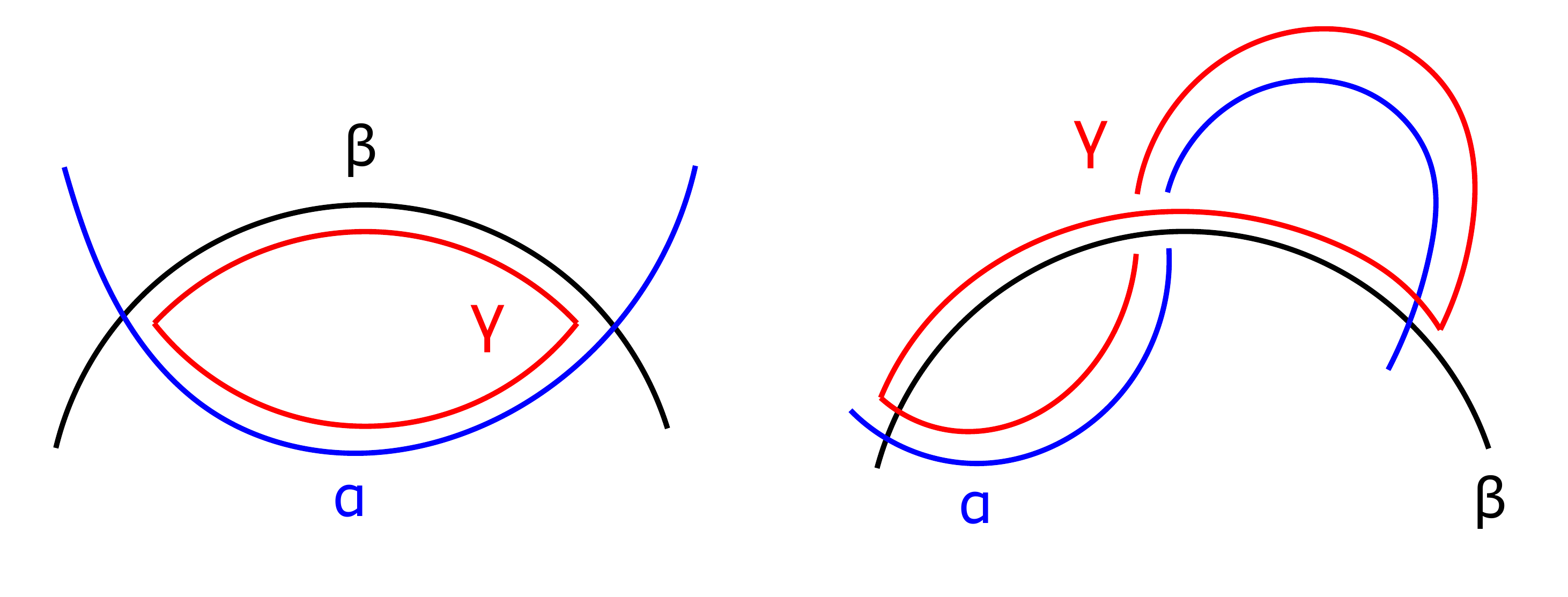}}
\caption{Two configurations of a bicorn curve $\gamma$ between $\alpha$ and $\beta$; The two intersection points have opposite orientations on the left and same orientation on the right. }
\label{bicorn}
\end{figure}

Based on the orientations of two intersection points, there are two configurations of the bicorn curves illustrated in the Figure \ref{bicorn}. If the surface is closed, any bicorn curves defined above are essential, as $\alpha$ and $\beta$ intersect minimally. The intersection number of $\alpha$ and $\beta$ is finite, so the number of bicorn curves is finite.

The collection of bicorn curves can be partially ordered. Two bicorn curves $\gamma < \gamma'$ if the $\beta$-arc of $\gamma'$ strictly contains the $\beta$-arc of $\gamma$.  Take a minimal subarc $b' \subset \beta$ that does not intersect $\alpha$ except for the endpoints. Denote the bicorn curve $\alpha_1=a' \cap b'$ as the union of the minimal subarc $b' \subset \beta$ and the subarc $a'$ of $\alpha$ determined by the endpoints. Then, $\alpha$ intersects $ \alpha_1$ at most once.  One can extend the minimal subarc $b'$ to the next intersection point with $a'$. The extended subarc $b''$ of $\beta$ intersects $a'$ on the endpoint, the bicorn curve is denoted as $\alpha_2=a''\cup b''$. See the Figure \ref{bicorn_extension}. As we can see, $\alpha_1$ intersects $\alpha_2$ at most once. 
 
 Next, we extend the subarc $b''$ to the minimal subarc $b'''$ such that $b'''$ intersects $a''$ right on the endpoint, the subarc of $a''$ with bounded by the new intersection point is denoted by $a'''$. The bicorn curve $\alpha_3=a'''\cup b'''$ intersects $\alpha_2$ at most once.

\begin{figure}[ht]
\scalebox{.40}{\includegraphics[origin=c, width=1.20 \textwidth,  height = 1.0 \textwidth]{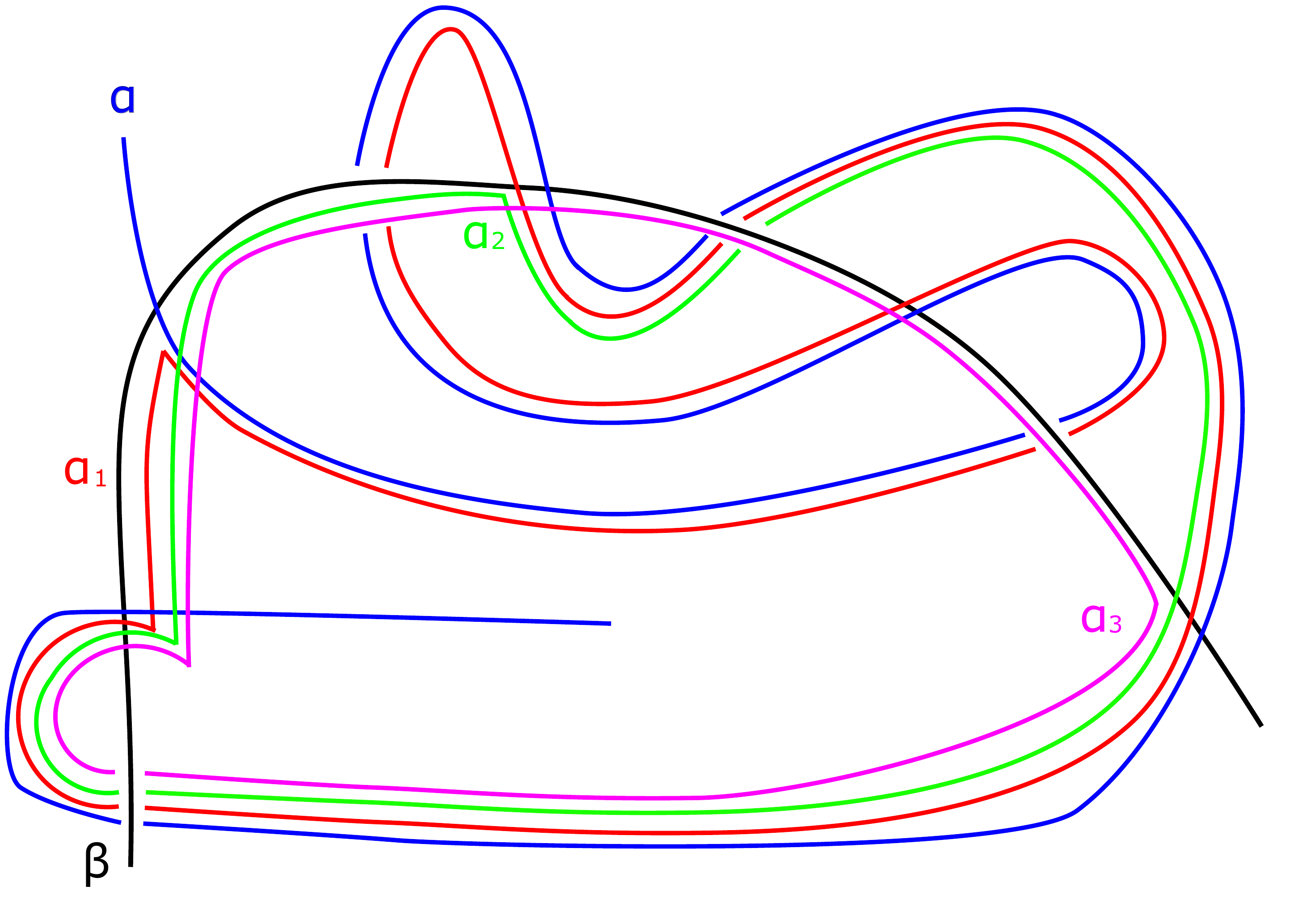}}
\vspace{0cm}
\caption{Extend the bicorn curve $\alpha_1$ along $\beta$ to obtain bicorn curves $\alpha_2$ and $\alpha_3$.}
\label{bicorn_extension}
\end{figure}

Continue in this way, one will be able to construct a sequence of bicorn curves $\alpha=\alpha_0, \alpha_1, \cdots, \alpha_n$, where the adjacent curves $\alpha_i$, $\alpha_{i+1}$ intersect at most once. Since the intersection number of $\alpha$ and $\beta$ is finite, the sequence must terminate at $\beta$, that is, $\alpha_n=\beta$. 
The sequence of bicorn curves constructed above is called a \emph{bicorn path}. We will use $B(\alpha, \beta)$ to denote one bicorn path between $\alpha$ and $\beta$, and all the bicorn paths are denoted by $\mathcal{B}(\alpha, \beta)$. Note that a bicorn path is not a real path, as the adjacent curves are not necessarily disjoint. 

The bicorn paths $\mathcal{B}(\alpha, \beta)$ have uniformly bounded Hausdorff distance to the geodesics between $\alpha$ and $\beta$. 
 \begin{Prop}
 \label{14-bounded}
 Let $\Gamma$ be a geodesic connecting two curves $\alpha$ and $\beta$, then a bicorn path $B(\alpha, \beta)$ between $\alpha$ and $\beta$ stay in the 14-neighborhood of $\Gamma$ in the curve graph. 
\end{Prop}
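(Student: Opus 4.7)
My plan is to deduce the bound from two structural properties of bicorn paths established (explicitly or implicitly) in \cite{PS}: the $1$-slimness of bicorn triangles and the uniformly bounded Hausdorff distance between different bicorn paths sharing endpoints. Together these fit bicorns into a Bowditch-style path system in the hyperbolic space $\mathcal{C}(S)$, from which a divide-and-conquer argument places the paths uniformly close to geodesics.

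Concretely, I would fix a bicorn $\nu \in B(\alpha,\beta)$ and aim to exhibit a vertex $\gamma_j$ of $\Gamma$ with $d_{\mathcal{C}(S)}(\nu,\gamma_j)\le 14$. I plan to pick the vertex $\gamma_k$ of $\Gamma$ closest to $\nu$, set $D = d_{\mathcal{C}(S)}(\nu,\gamma_k)$, and form the bicorn triangle on $\alpha,\gamma_k,\beta$. By $1$-slimness, $\nu$ lies within distance $1$ of some $\nu' \in B(\alpha,\gamma_k) \cup B(\gamma_k,\beta)$, say $\nu' \in B(\alpha,\gamma_k)$.

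To turn this into a uniform bound rather than an $O(\log n)$ recursion, I would use the freedom to choose a convenient element of $\mathcal{B}(\alpha,\gamma_k)$. The Przytycki--Sisto construction allows me to build a bicorn path from $\alpha$ to $\gamma_k$ that incorporates an $\alpha$- or $\beta$-arc of $\nu$ itself, so that $\nu'$ can be taken to be $\nu$ or a bicorn neighbor of $\nu$; the uniformly bounded Hausdorff distance between different bicorn paths then guarantees that swapping to this preferred representative only costs a bounded additive constant. Combined with the symmetric argument applied to $B(\gamma_k,\beta)$ and the extremality of $\nu$, this collapses the recursion into a single step. Plugging in the explicit $1$-slim constant for bicorn triangles and the explicit Hausdorff constant for bicorn paths from \cite{PS} should then yield the value $14$.

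The main obstacle is precisely this quantitative bookkeeping: the slim-triangle property alone only delivers a recursion whose naive iteration grows with $d(\alpha,\beta)$, so the argument must invoke the richer combinatorial structure of bicorns, namely the freedom to re-route the bicorn path along the $\alpha$- and $\beta$-arcs of $\nu$. Making this arc-surgery rearrangement rigorous, so that ``passing to a neighboring bicorn path'' respects the decomposition into the $\alpha$-arc and the $\beta$-arc, is where the proof should concentrate its effort and where the explicit constant $14$ must be pinned down.
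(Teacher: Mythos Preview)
The paper does not actually prove this proposition: it simply cites \cite{CJM} for the case of coherently intersecting $\alpha,\beta$ and asserts that the same argument goes through in general. So there is no in-paper proof to compare against; the relevant comparison would be with the argument in \cite{CJM} (and with Rasmussen's Corollary~2.17 in \cite{Ra1} for the bordered case).

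That said, your proposal has a genuine gap at the key step. You correctly observe that iterating $1$-slimness of bicorn triangles over a bisection of $\Gamma$ only yields a bound of order $\log d(\alpha,\beta)$, not a constant. Your proposed fix---choosing the bicorn path $B(\alpha,\gamma_k)$ so that the slimness witness $\nu'$ is $\nu$ itself or an immediate bicorn neighbor of $\nu$---is circular: if $\nu'$ is essentially $\nu$, then passing from $B(\alpha,\beta)$ to $B(\alpha,\gamma_k)$ has not brought $\nu$ any closer to $\Gamma$, and the ``extremality of $\gamma_k$'' gives you nothing, since $\gamma_k$ was already the closest point of $\Gamma$ to $\nu$. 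In particular there is no mechanism in your outline that converts a single application of $1$-slimness into an absolute bound on $d(\nu,\Gamma)$; the recursion does not collapse, it simply stalls. Note also that a bicorn between $\alpha$ and $\gamma_k$ consists of an $\alpha$-arc and a $\gamma_k$-arc, so it cannot ``incorporate the $\beta$-arc of $\nu$'' as you suggest.

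To get an honest constant (and the specific value $14$) one needs an additional ingredient beyond slimness---for instance, showing directly that bicorn paths are unparametrized quasi-geodesics with explicit constants (this is the route taken in \cite{Ra1}), or exploiting a combinatorial monotonicity of the $\beta$-arcs along the bicorn path, as in \cite{CJM}. Your sketch invokes a ``uniformly bounded Hausdorff distance between different bicorn paths sharing endpoints,'' but that statement is essentially equivalent to what you are trying to prove, so it cannot be used as a black box here without begging the question.
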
 
In \cite{CJM}, a proof was given by Chang, Menasco and the author for the curves $\alpha$ and $\beta$ with coherent intersection.  The proof proceeds without any change for the general cases.


\section{The proof}
\label{sec: proof}

In this section, we will utilize the bicorn curves to prove the Theorem \ref{small bounds} for the closed oriented surfaces $X$ with genus $ \geq 2$. With the uniformly bounded Hausdorff distance of the bicorn paths and 1-slimness of the bicorn curve triangles, the proof follows immediately from Webb's strategy to the proof of the Theorem 3.2 in \cite{Webb}. To start off, let us deal with a particular case when the geodesic is away from the curves in the subsurface. 

\begin{Lem}
\label{18}
Suppose that $X$ is a closed oriented surface with  genus $\geq 2$ and $Y$ is an essential subsurface of $X$ with $\xi(Y) \neq 0$. Let $\Gamma$ be a geodesic in $\mathcal{C}(X)$ such that each $x \in \Gamma$ cuts $Y$, and $\alpha$ be an essential boundary component of the non-annular subsurface $Y$ or the core of $Y$ if it is annular. If $d_{\mathcal{C}(X)}(x, \alpha) \geq 18$, for any vertex $x \in \Gamma$, then $d_{\mathcal{AC}(Y)}(\Gamma) \leq 3$. 
\end{Lem}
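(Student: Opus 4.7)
The plan is to show that for any two vertices $x,y\in\Gamma$ one has $d_{\mathcal{AC}(Y)}(\pi_A(x),\pi_A(y))\le 3$, by constructing a length-three chain in $\mathcal{AC}(Y)$ through the projections of carefully chosen bicorn curves on the sides $B(x,\alpha)$ and $B(y,\alpha)$ of the bicorn triangle on $(x,y,\alpha)$. The key geometric input is the following: if $\gamma$ is a bicorn curve between a vertex $z\in\{x,y\}$ and $\alpha$ and $\gamma$ cuts $Y$, then $\pi_A(\gamma)\subseteq \pi_A(z)$ in $\mathcal{AC}_0(Y)$. Writing $\gamma=a\cup b$ with $a\subset z$ and $b\subset\alpha\subset\partial Y$, the $\alpha$-arc $b$ contributes nothing to the interior of $Y$, so $\gamma\cap Y=a\cap Y$; since the endpoints of $a$ lie on $\alpha\subset\partial Y$, each component of $a\cap Y$ is bounded in $a$ by crossings of $z$ with $\partial Y$ and therefore agrees with a component of $z\cap Y$. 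Because $\pi_A(z)$ consists of pairwise disjoint subarcs of $z$, the set $\pi_A(\gamma)\cup \pi_A(z)$ spans a simplex in $\mathcal{AC}(Y)$, so
\[
d_{\mathcal{AC}(Y)}\bigl(\pi_A(\gamma),\pi_A(z)\bigr)\le 1.
\]

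A second elementary fact I would use is a criterion for cutting $Y$: if $\delta$ misses $Y$, it is realized disjoint from the interior of $Y$ and hence disjoint from $\alpha\subset\partial Y$ (with the analogous statement in the annular case where $\alpha$ is the core); contrapositively, any $\delta$ with $d_{\mathcal{C}(X)}(\delta,\alpha)\ge 2$ cuts $Y$. With these two ingredients the argument reduces to exhibiting $\gamma\in B(x,\alpha)$ and $\gamma'\in B(y,\alpha)$ that both cut $Y$ and are disjoint in $X$, since Webb's arc bound (Lemma \ref{arc bounds}) applied to the disjoint pair $\gamma,\gamma'$ combined with the key observation applied at both ends gives
\[
d_{\mathcal{AC}(Y)}(\pi_A(x),\pi_A(y)) \le 1+1+1 = 3.
\]

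To produce such $\gamma$ and $\gamma'$ I would invoke the $1$-slimness of bicorn curve triangles together with Proposition \ref{14-bounded}. Along a bicorn path $B(x,\alpha)=(\beta_0=x,\dots,\beta_n=\alpha)$, consecutive curves intersect at most once, so the sequence $\bigl(d_{\mathcal{C}(X)}(\beta_i,\alpha)\bigr)_i$ decreases from at least $18$ to $0$ in steps of size at most $2$. Choosing $\gamma=\beta_{i^*}$ as the last vertex on this path at $\mathcal{C}(X)$-distance at least $2$ from $\alpha$ gives $d_{\mathcal{C}(X)}(\gamma,\alpha)\in\{2,3\}$, and then $\gamma$ cuts $Y$ by the second observation. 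Applying $1$-slimness of the bicorn triangle on $(x,y,\alpha)$ yields a partner $\gamma'\in B(x,y)\cup B(y,\alpha)$ with $d_{\mathcal{C}(X)}(\gamma,\gamma')\le 1$. By Proposition \ref{14-bounded} every vertex of $B(x,y)$ lies in the $14$-neighborhood of the subgeodesic $[x,y]\subset\Gamma$, whose vertices are at $\mathcal{C}(X)$-distance at least $18$ from $\alpha$; hence every vertex of $B(x,y)$ is at distance at least $18-14=4$ from $\alpha$, whereas $d_{\mathcal{C}(X)}(\gamma',\alpha)\le d_{\mathcal{C}(X)}(\gamma,\alpha)+1\le 4$. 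This forces $\gamma'$ onto $B(y,\alpha)$ (with a tighter choice of $\gamma$ if we are at the boundary case $d_{\mathcal{C}(X)}(\gamma,\alpha)=3$), and a short slide along $B(y,\alpha)$ using the same step-size-at-most-$2$ reasoning adjusts $\gamma'$ so that $d_{\mathcal{C}(X)}(\gamma',\alpha)\ge 2$ and $\gamma'$ also cuts $Y$.

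The main obstacle is the calibration of the constant $18$: $\gamma$ must sit close enough to $\alpha$ that its $1$-slim partner cannot escape onto $B(x,y)$, yet far enough from $\alpha$ to still cut $Y$. The split $18=14+4$ is precisely what makes these two constraints compatible, as $14$ is the bicorn-to-geodesic Hausdorff distance of Proposition \ref{14-bounded} and $4$ is the safety margin separating vertices of $B(x,y)$ from the partner of any bicorn at distance at most $3$ from $\alpha$. Handling the boundary cases $d_{\mathcal{C}(X)}(\gamma,\alpha)=3$ and $d_{\mathcal{C}(X)}(\gamma',\alpha)=1$ cleanly, and verifying that the same scheme survives the translation to the annular subsurface projection, are the only technically delicate steps.
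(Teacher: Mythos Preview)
Your route through the bicorn triangle is genuinely different from the paper's. The paper traverses the side $B(\gamma_i,\gamma_j)$ opposite $\alpha$: every bicorn there has $d_{\mathcal C(X)}(\cdot,\alpha)\ge 18-14=4$, hence meets $\alpha$ enough times that the explicit Przytycki--Sisto surgery (a minimal subarc $c\subset\alpha$ together with a subarc of the $\gamma_i$-arc or of the $\gamma_j$-arc) always succeeds; locating the switching index $k$ along $B(\gamma_i,\gamma_j)$ then yields concrete subarcs $\delta_i\subset\gamma_i$ and $\delta_j\subset\gamma_j$ with endpoints on $\alpha$, from which the bound $1+1+1$ is read off directly. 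You instead descend the side $B(x,\alpha)$ toward $\alpha$, pick $\gamma$ near the bottom, and invoke $1$-slimness as a black box to land a partner on $B(y,\alpha)$. Your containment observation $\pi_A(\gamma)\subseteq\pi_A(z)$ for a bicorn $\gamma\in B(z,\alpha)$ is exactly the mechanism the paper exploits at the end, so that part is fine.

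The genuine gap is the meaning of ``$1$-slim'' for bicorn triangles. The Przytycki--Sisto statement produces a partner $\gamma'$ with $|\gamma\cap\gamma'|\le 1$; on a closed surface of genus $\ge 2$ this gives only $d_{\mathcal C(X)}(\gamma,\gamma')\le 2$, not $\le 1$ --- indeed you yourself use step size $2$ when descending $B(x,\alpha)$. Two places in your chain then fail. First, Lemma~\ref{arc bounds} requires $\gamma$ and $\gamma'$ to be disjoint, so when they meet once the middle ``$+1$'' is unjustified. Second, the localisation now gives only $d_{\mathcal C(X)}(\gamma',\alpha)\le 3+2=5$, which does not force $\gamma'$ off $B(x,y)$ (whose vertices lie at distance $\ge 4$ from $\alpha$); the ``boundary case'' you flag is therefore the generic situation, and the subsequent slide along $B(y,\alpha)$ to make $\gamma'$ cut $Y$ severs the only link you had between $\gamma$ and $\gamma'$. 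The paper sidesteps all of this by staying on $B(\gamma_i,\gamma_j)$, where the surgery is always available and gives direct control over the arcs $\delta_i,\delta_j$ rather than a curve-graph distance bound on a pair of companion bicorns.
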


\begin{proof}
\begin{figure}[ht]
\scalebox{.2}{\includegraphics[origin=c]{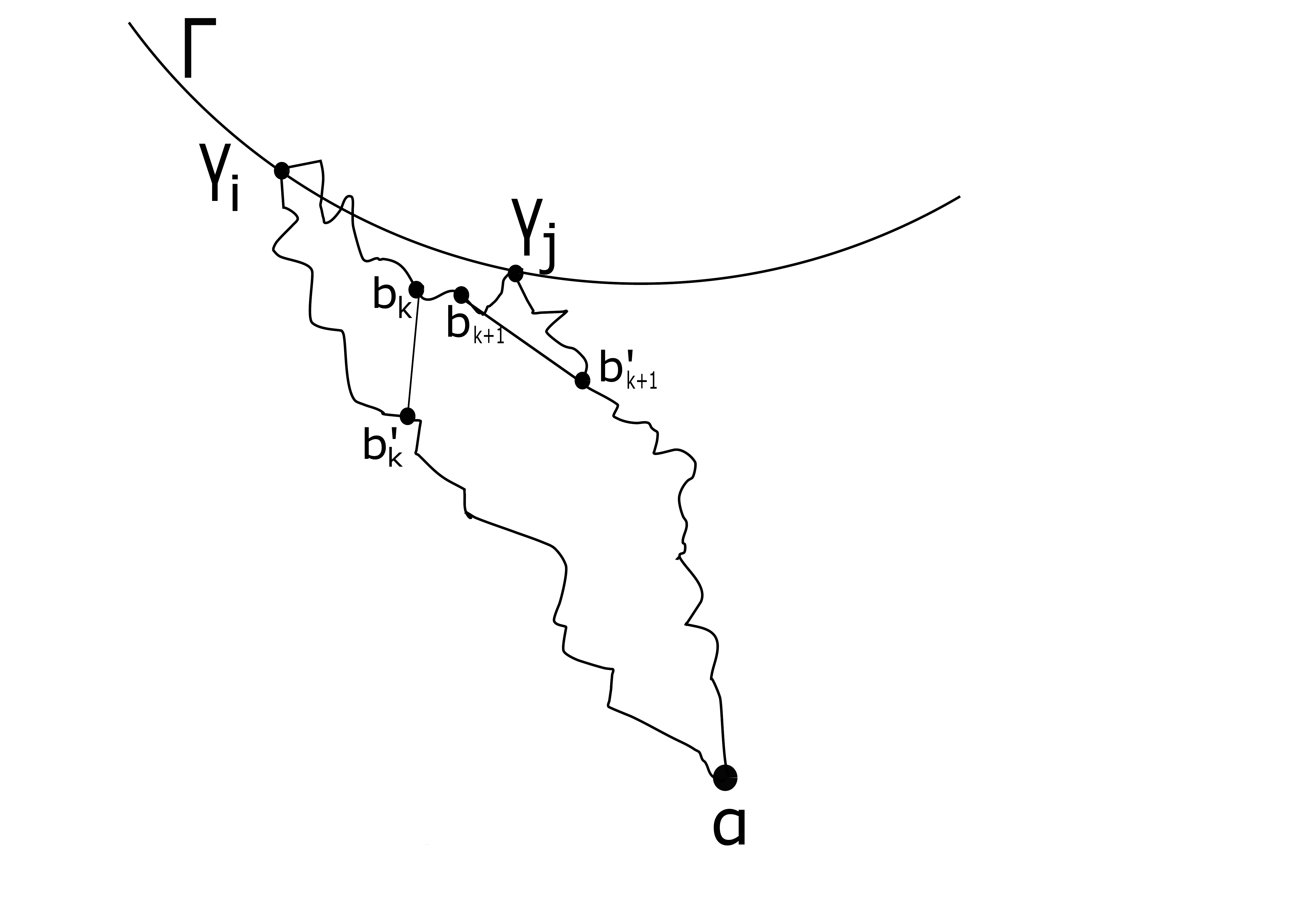}}
\caption{The bicorn paths between $\gamma_i$, $\gamma_j$ and $\alpha$ form a bicorn curve triangle. $b_k$ and $b_{k+1}$ are two adjacent bicorn curves in $B(\gamma_i, \gamma_j)$, there exists $b'_k \in B(\gamma_i, \alpha)$ and  $b'_{k+1} \in B(\gamma_j, \alpha)$ due to the 1-slimness of the bicorn curve triangle. }
\label{18-geodesics}
\end{figure}

Let $\gamma_i$ and $\gamma_j$ be any two distinct vertices in the geodesic $\Gamma$, we need to show that $d_{\mathcal{AC}(Y)}(\gamma_i, \gamma_j) \leq 3$. 
By the Proposition \ref{14-bounded}, each bicorn curve of $B(\gamma_i, \gamma_j)$ stays in the 14-neighborhood of the geodesic segment $[\gamma_i, \gamma_j] \subset \Gamma$. Since $d_{\mathcal{C}(X)}(x, \alpha) \geq 18$ for any $x \in \Gamma$, then $d_{\mathcal{C}(X)}(\gamma, \alpha) \geq 4$ for any bicorn curve $\gamma \in B(\gamma_i, \gamma_j)$. The minimal intersection number of a filling pair on a closed oriented surface $X$ with genus $\geq 2$ is 4, so $|\gamma \cap \alpha| \geq 4$.

Suppose that $\gamma_i$, $\gamma_j$ and $\alpha$ are in pairwise minimal intersection without any triple points. For any bicorn curve $\gamma = a \cup b$, where $a$ is $\gamma_i$-arc and $b$ is $\gamma_j$-arc, we consider the intersection points of $\alpha$ with the $a$ and $b$. One can take a minimal arc $c$ of $\alpha$ intersecting $a$ only at the two endpoints and $c$ intersects $b$ at most once (or intersecting $b$ only at the two endpoints and intersects $a$ at most once).  This is the key observation to prove the Lemma 2.6 \cite{PS} that the bicorn curve triangles are 1-slim, see the Figure \ref{1-slim} for an illustration. Recall that $|\gamma \cap \alpha| = |(a \cup b) \cap \alpha| \geq 4$,  then either $|a \cap \alpha| \geq 2$ or $|b \cap \alpha| \geq 2$, so the curve surgery is allowed.

\begin{figure}[ht]
\scalebox{.2}{\includegraphics[origin=c]{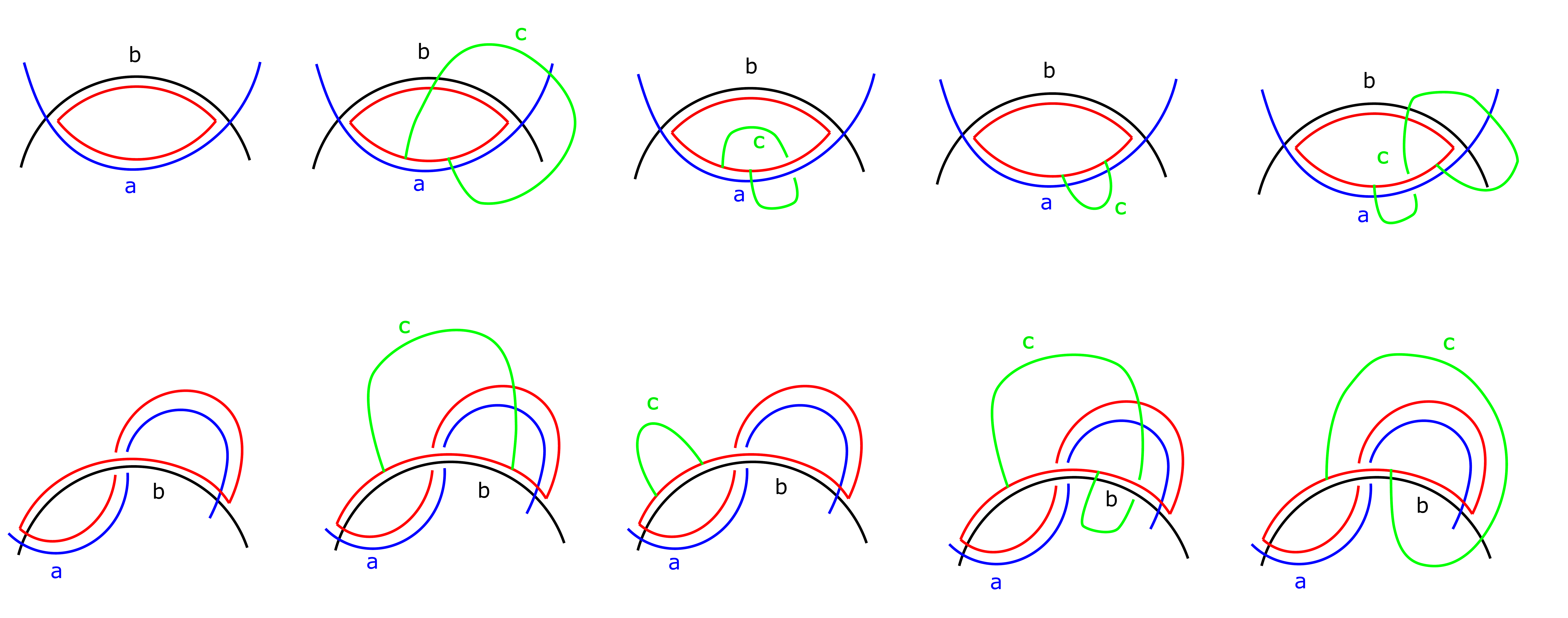}}
\caption{ Either the $\gamma_i$-arc $a$ intersects $\alpha$ at least twice or the $\gamma_j$-arc $b$ intersects $\alpha$ at least twice. The minimal arc $c \subset \alpha$ intersects $a$ (or $b$) at the two endpoints (intersection number $\leq$ 1) while intersecting the $b$ (or $a$) at most once.}
\label{1-slim}
\end{figure}

It follows that either one can construct a bicorn curve in $B(\gamma_i, \alpha)$ from $a$ and $c$ or a bicorn curve in $B(\gamma_j, \alpha)$ from $b$ and $c$. Since $\gamma_i \in B(\gamma_i, \gamma_j)$ and $\alpha$ can only produce bicorn curves in $B(\gamma_i, \alpha)$, and $\gamma_j \in B(\gamma_i, \gamma_j)$ and $\alpha$ can only produce bicorn curves in $B(\gamma_j, \alpha)$, one can conclude that there must be two adjacent bicorn curves $b_k, b_{k+1}$ (possibly same) in $B(\gamma_i, \gamma_j)$ such that $b_k$ will produce a bicorn curve $b'_k \in B(\gamma_i, \alpha)$ and $b_{k+1}$ will produce a bicorn curve $b'_{k+1} \in B(\gamma_j, \alpha)$, as illustrated in the Figure \ref{18-geodesics}.

By the construction of the curves $b_k, b'_k, b_{k+1}, b'_{k+1}$, one can find one subarc from $\pi_A(\gamma_i)$ and another subarc from $\pi_A(\gamma_j)$ that are disjoint from each other. More precisely, such subarcs can be chosen from $\pi_A(\delta_i)$ and $\pi_A(\delta_j)$, where $\delta_i$ belongs to the $\gamma_i$-arc of $b_k$ that is bounded by the endpoints of one arc $c \subset \alpha$,  and $\delta_j$ belongs to the $\gamma_j$-arc of $b_{k+1}$ that is bounded by the endpoints of another arc $c \subset \alpha $. One issue is that the $\pi_A(b'_k)$ and $\pi_A(b'_{k+1})$ might be empty. It only occurs when the arcs bounded by the endpoints of arc $c \subset \alpha$ does not have any intersection with $\alpha$ in the interior of the arcs and the two intersection points have opposite orientations. It implies that the $b'_k$ (or $b'_{k+1}$) is the bicorn curve is disjoint from $\alpha$. Then,
$$d_{\mathcal{C}(X)}(b_k, \alpha) \leq d_{\mathcal{C}(X)}(b_k, b'_k) + d_{\mathcal{C}(X)}(b'_k, \alpha) \leq 2 + 1 = 3,$$
which contradicts to $d_{\mathcal{C}(X)}(b_k, \alpha) \geq 4$. With the 1-Lipschitz property, one will obtain 
$$d_{\mathcal{AC}(Y)}(\gamma_i, \gamma_j) \leq 1 + 1 + 1 = 3.$$
\end{proof}

\begin{Rem}
The bound in the Lemma \ref{18} is comparable to the bound 4 in the motivating examples by Masur and Minsky \cite{MM1}. For the non-annular subsurfaces, the bound is same as the bound given by Webb using the unicorn arcs in the Theorem 4.1.7 \cite{Webb2}, while it is slightly better for annular subsurfaces. 
\end{Rem}

\begin{proof}[Proof of the Theorem \ref{small bounds}]
Let $\alpha$ be an essential boundary component of the non-annular subsurface $Y$ and the core of $Y$ if it is annular.  Let $I = N_{18}(\alpha) \cap \Gamma$ be the intersection (possibly empty) of the 18-neighborhood of the $\alpha$ and the geodesic $\Gamma$, then there exists a geodesic segment $g$ of length at most 36 with $I \subset g \subset \Gamma$. Denote $\gamma'_i$ and $\gamma'_j$ as the two ends of the geodesic segment $g$. Suppose at least one of $\gamma_i$ and $\gamma_j$ is not in $g$. Otherwise, since each vertex of $g$ cuts $Y$, then $d_{\mathcal{AC}(Y)}(\gamma_i, \gamma_j) \leq 36$ follows from the Lemma \ref{arc bounds}.

\begin{figure}[ht]
\scalebox{.2}{\includegraphics[origin=c]{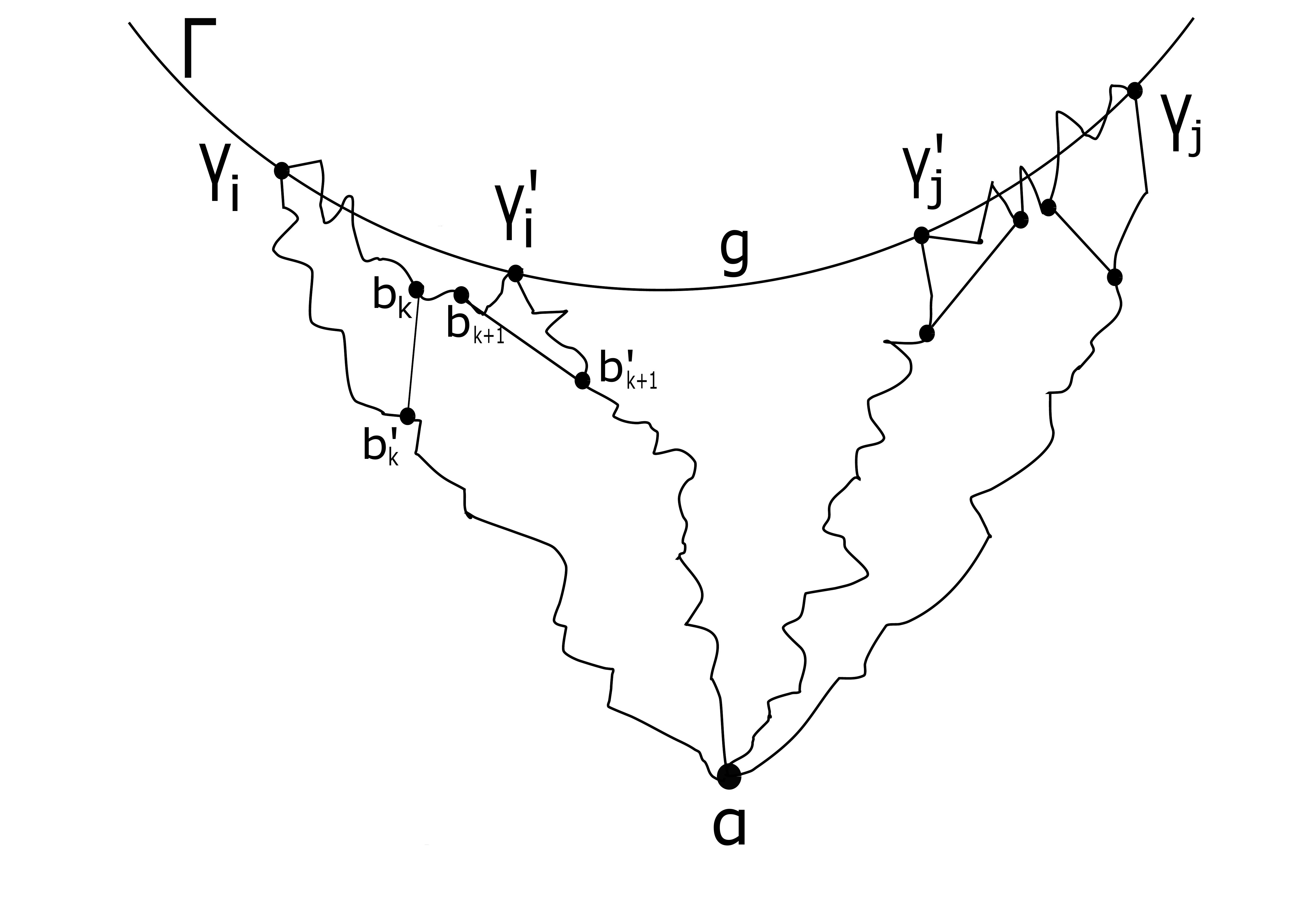}}
\caption{The geodesic $\Gamma$ is divided into three pieces. The middle geodesic segment $g = [\gamma'_i, \gamma'_j]$ (possibly empty) contains the vertices of $\Gamma$ within the 18-neighborhood of $\alpha$ in the $\mathcal{C}(X)$.  }
\label{geodesics}
\end{figure}

Assume $\gamma_i$, $\gamma_j$ or both are not in the geodesic segment $g \subset \Gamma$, see the Figure \ref{geodesics}. Using the Lemma \ref{arc bounds} and the Lemma \ref{18}, one will have 
$$d_{\mathcal{AC}(Y)}(\gamma_i, \gamma_j) \leq d_{\mathcal{AC}(Y)}(\gamma_i, \gamma'_i) + d_{\mathcal{AC}(Y)}(\gamma'_i, \gamma'_j) + d_{\mathcal{AC}(Y)}(\gamma'_j, \gamma_j) \leq  3 + 1 + 36 + 1 + 3= 44.$$
\end{proof}

\begin{Rem}
The strategy can be applied to the surfaces with boundary, because the bicorn curves can be defined in the same manner as long as the bicorn curves are essential. Restricting to the nonseparating curves, the bicorn curves traingles have been used to prove the uniform hyperbilicity of the nonseparating curve graphs by Rasmussen \cite{Ra2}. Combined with the uniformly bounded Hausdorff distance of the bicorn paths on the surfaces with boundary (Corollary 2.17, Rasmussen \cite{Ra1}), one will be able to prove a similar result, possibly with larger bounds.
\end{Rem}

\end{document}